\newtheorem{thm}{Theorem}[section]
\newtheorem{cor}[thm]{Corollary}
\theoremstyle{definition}
\theoremstyle{remark}
\newtheorem{rem}[thm]{Remark}
\numberwithin{equation}{section}
\begin{document}

\title[Feedback control for the Ginzburg-Landau equation]{Finite-parameter feedback control for stabilizing the complex Ginzburg-Landau equation}%
\author{Jamila Kalantarova}%
\address{Department of Mathematics, Izmir Institute of Technology, İzmir, Turkey}%
\email{jamilakalantarova@iyte.edu.tr}%

\author{Türker Özsarı}%
\address{Department of Mathematics, Izmir Institute of Technology, İzmir, Turkey}%
\email{turkerozsari@iyte.edu.tr}%

\thanks{Both researchers in this work were supported by TÜBİTAK 3501 Career Grant 115F055}%
\keywords{Ginzburg-Landau equations, feedback stabilization, finite volume elements, finitely many Fourier modes, nodal observables}%
\begin{abstract}
  In this paper, we prove the exponential stabilization of solutions for complex Ginzburg-Landau equations using finite-parameter feedback control algorithms, which employ finitely many volume elements, Fourier modes or nodal observables (controllers).  We also propose a feedback control for steering solutions of the Ginzburg-Landau equation to a desired solution of the non-controlled system.  In this latter problem, the feedback controller also involves the measurement of the solution to the non-controlled system.
\end{abstract}
\maketitle
\tableofcontents
\section{Introduction}  In this paper, we study the complex Ginzburg-Landau equation (CGLE), which is a mathematical model to describe near-critical instability waves such as a reaction-diffusion system near a Hopf-bifurcation.  Specific applications of this equation include nonlinear waves, second-order phase transitions, superconductivity, superfluidity, Bose-Einstein condensation and liquid crystals. See \cite{AK2002} and the references therein for an overview of several phenomena described by the CGLE.

The general form of the CGLE is written as
\begin{equation}\label{CGLE}
u_t-(\lambda+i\alpha)\Delta u+(\kappa+i\beta)|u|^p u-\gamma u=0,\quad x\in\Omega,t>0,
\end{equation} where $u$ denotes the complex oscillation amplitude, and $\beta\in\mathbb{R}$ and $\alpha\in\mathbb{R}$ are the (nonlinear) frequency and (linear) dispersion parameters, respectively.  The constants $\lambda$ and $\kappa$ are assumed to be strictly positive.  $\Omega$ is a general domain in $\mathbb{R}^n$ and $p>0$ is the source-power index.  \eqref{CGLE} can be associated with Dirichlet, Neumann, periodic, or mixed boundary conditions depending on the physical situation.  Note that equation \eqref{CGLE} simultaneously generalizes the real reaction diffusion equation and the nonlinear Schrödinger equation,  which can be obtained in the limit as the parameter pairs $(\alpha,\beta)$ and $(\lambda,\kappa)$ tend to zero, respectively.

It is well-known that if the \emph{Benjamin-Feir-Newell} stability criteria ($\alpha\beta > -1$) fails to hold, then CGLE might possess unstable solutions such as the trivial solution and chaos might be observed.  The Stoke's solution $\displaystyle u(x,t)\equiv \sqrt{\frac{\gamma}{\kappa}}e^{-i\frac{\beta\gamma}{\kappa}t}$ is another example of a space independent periodic solution (for $p=2$) whose perturbations might be unstable \cite{StuPri78}.  Motivated by these observations, we want to study the stabilization problem for the CGLE.  We will be interested only in the case $\gamma>0$, since otherwise solutions already decay to zero, and there is no room for instability.

Controlling chaotic behavior has been one of the major subjects in the theory of evolution equations, and many approaches have been developed.  One such approach involves using local or global interior control terms.  Others involve external (boundary) controls, especially in models where it is difficult or impossible to access the medium.  Using feedback type controls is a common tactic to suppress the chaotic behavior and bring solutions back to a stable state. However, non-feedback type controls (open loop control systems) are also used for steering solutions to or near a desired state.  Exact, null, or approximate controllability models are some examples.

Regarding the stabilization of the unstable solutions of the Ginzburg-Landau equation, using an internal feedback has been a common technique.  From this point of view, both global (space independent) and local (space dependent) controls were used.  At the beginning, time-delay local feedback control mechanisms were used (see \cite{BM2004}, \cite{BS1996}, \cite{BB2006}, \cite{GN2006}). Then, a linear combination of spatially translated and time-delay local feedback control terms were introduced. For example \cite{MoSi2004} and \cite{Posi2007} used this technique to stabilize the one and two dimensional Ginzburg-Landau equations with cubic nonlinearity, respectively.  There are also some works which combine both local and global type feedback controls where a local control is by itself not sufficient to control the turbulence (see, for example \cite{StiBet2010} and \cite{StiAlf2007}).  However, in some studies (e.g., \cite{BM1996}, \cite{RNG2007}), only global feedback controls were shown to be effective, too.

Contrary to the internal feedback control mechanisms mentioned above, boundary controls which are obtained by using the so called "back-stepping" methodology were also used to stabilize the solutions of Ginzburg-Landau evolutions, see for instance \cite{AamSmyKrs2005} and \cite{AamSmyKrs2007} for stabilization of the linearized one-dimensional Ginzburg-Landau equation from the boundary.

Using non-feedback type controls (i.e., open loop control systems) is another method to steer solutions of a system to a desired state preferably in small time.  One such choice for the desired state is the zero state in which case one talks about the null-controllability.  For example, \cite{RosZha2009} proved the null-controllability of the solutions of the Ginzburg-Landau equation both from the interior and the boundary via a non-feedback type control.

It is well-known that the Ginzburg-Landau equation has a finite dimensional asymptotic in-time behavior \cite{GH87}.  In other words, there is a finite number of degrees of freedom for the Ginzburg-Landau model.  There has been some recent work utilizing this type of finite dimensionality for other dissipative systems to construct feedback controls that only use finitely many volume elements, finitely many Fourier modes, or finitely many nodal observables. For example, \cite{AT2014} studied the one dimensional cubic reaction-diffusion equation, also known as the Chafee-Infante equation. The authors presented a unified approach that can be applied to a large class of nonlinear partial differential equations including the CGLE that we study here.  The study carried out in \cite{AT2014} was important since it pointed out to the fact that the finite-dimensional asymptotic behaviour is sufficient for constructing feedback controls for most dissipative dynamical systems. See also \cite{KT2016} for a similar discussion of the nonlinear wave equation.  Motivated by these recent works, we study the complex Ginzburg-Landau equation subject to a feedback control which uses only finitely many determining systems of the parameters mentioned above.

More precisely, we study the following feedback control problems in this work:

\begin{enumerate}
  \item $L^2-$stabilization of the one dimensional CGLE model with finitely many volume elements.
  \item Both $L^2$ and $H^1$-stabilization of the CGLE model with finitely many Fourier modes.
  \item Steering solutions of the CGLE model: (i) to any solution (ii) to exponentially decaying solutions.
  \item $L^2-$stabilization of the one dimensional CGLE model with finitely many nodal observables.
\end{enumerate}
\begin{rem}[\emph{A few words on the global well-posedness}]
Our proofs are based on the multiplier technique and intrinsic properties of the feedback control.  The multiplier method in our proofs can be easily justified by classical methods where one works on approximate solutions first and then passes to the limit in the energy estimates.  The approximate solutions as well as the global solvability of the original models we study here can be obtained by using different techniques.  One method is to use the maximal monotone operator theory where various terms in the equation are first replaced by their Yosida approximations; see \cite{OY2010} and the references therein.  Another approach is of course using the Galerkin procedure where the infinite dimensional model is projected on a finite dimensional subspace.  Most recently, some $L^p-L^q$ estimates have been proved on the corresponding evolution operator of the Ginzburg-Landau equation \cite{SYY2016}, from which one can also obtain the solvability of solutions.  We will omit the details of these procedures in this paper, since the additional feedback control terms that we use here do not add any extra difficulties to the well-posedness problem.  Hence, in all of our results we will simply assume the existence of a sufficiently nice solution (in time and space).  Depending on the model posed, solutions will be assumed to be at $L^2$, $H^1$, or $H^2$ levels in space.
\end{rem}
\section{$L^2-$stabilization with finite volume elements}  In this section, we consider the Ginzburg-Landau equation with finite volume elements feedback control on a bounded interval $(0,L)$ with homogeneous Neumann boundary conditions at both ends of the domain:
\begin{equation}\label{a1}
u_t-(\lambda+i\alpha)u_{xx}+(\kappa+i\beta)|u|^p u-\gamma u=-\mu\sum_{k=1}^{N}\overline{u}_k\chi_{J_k}(x), x\in (0,L), t>0,
\end{equation}
\begin{equation}\label{a2}
u_x(0,t)=u_x(L,t)=0, \quad t>0,
\end{equation}
\begin{equation}\label{initial}
u(x,0)=u_0(x), x\in (0,l),
\end{equation} where $\lambda,\kappa,\gamma>0,$ $\alpha,\beta\in \mathbb{R}$, $\displaystyle J_k\equiv \left[\frac{(k-1)L}{N},\frac{kL}{N}\right)$, $\displaystyle\overline{u}_k\equiv \frac{1}{|J_k|}\int_{J_k}udx$, and $\chi_{J_k}$ is the characteristic function on $J_k$ for $k=1,2,...,N$.  The right-hand side, which involves the local averages (observables) $\overline{u}_k$, is regarded as a feedback controller.\\

In what follows, we will use the following equivalent definition of $H^1(0,L)$-norm for convenience.
$$\|u\|_{H^1(0,L)}^2\equiv \frac{1}{L^2}\|u\|_{L^2(0,L)}^2+\|u_x\|_{L^2(0,L)}^2.$$
\begin{thm}Let $u$ be a sufficiently smooth solution of \eqref{a1}-\eqref{initial} with \begin{equation}\label{Assm1}\frac{1}{N^2}<\min\left\{1-\frac{4\gamma}{\mu},\frac{4\lambda}{\mu L^2}\right\}.\end{equation} Then $$\|u(t)\|_{L^2(0,L)}^2\le e^{-\mu\left(\frac{1}{2}-\frac{2\gamma}{\mu}-\frac{1}{2N^2}\right) t}\|u_0\|_{{L^2(0,L)}}^2$$ for $t\ge 0$.\end{thm}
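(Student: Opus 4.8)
The plan is to run the standard $L^2$ energy estimate and let the structure of the controller do the work. First I would multiply \eqref{a1} by the complex conjugate $\overline{u}$, integrate over $(0,L)$, and take the real part. The time term gives $\tfrac12\tfrac{d}{dt}\norm{u(t)}_{L^2(0,L)}^2$; integrating the dispersion term by parts and using the Neumann conditions \eqref{a2} to kill the boundary contribution leaves $\mathrm{Re}\bigl[(\lambda+i\alpha)\norm{u_x}_{L^2(0,L)}^2\bigr]=\lambda\norm{u_x}_{L^2(0,L)}^2$, so $\alpha$ plays no role at the $L^2$ level; the nonlinear term contributes $\mathrm{Re}\bigl[(\kappa+i\beta)\int_0^L\abs{u}^{p+2}\,dx\bigr]=\kappa\int_0^L\abs{u}^{p+2}\,dx\ge 0$ and may simply be discarded since $\kappa>0$; and the reaction term contributes $-\gamma\norm{u}_{L^2(0,L)}^2$. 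The crux is the feedback term: writing $I_N u:=\sum_{k=1}^N\overline{u}_k\chi_{J_k}$ for the piecewise-average interpolant and using that $u-\overline{u}_k$ has zero mean on each cell $J_k$, one obtains the exact identity
\begin{equation*}
-\mu\,\mathrm{Re}\int_0^L I_N u\,\overline{u}\,dx=-\mu\sum_{k=1}^N\abs{J_k}\,\abs{\overline{u}_k}^2=-\mu\norm{I_N u}_{L^2(0,L)}^2=-\mu\norm{u}_{L^2(0,L)}^2+\mu\norm{u-I_N u}_{L^2(0,L)}^2,
\end{equation*}
so the controller produces a clean damping $-\mu\norm{u}_{L^2(0,L)}^2$ together with a ``consistency'' remainder measuring how badly $u$ is resolved by its local averages.

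The second step is to absorb $\mu\norm{u-I_N u}_{L^2(0,L)}^2$ into the diffusion. Since on each $J_k$ the function $u-\overline{u}_k$ has vanishing mean and $\abs{J_k}=L/N$, a mean–zero Poincaré inequality applied cell by cell and summed yields an interpolation estimate of the type
\begin{equation*}
\norm{u-I_N u}_{L^2(0,L)}^2\le \frac{c_0}{N^2}\Bigl(\norm{u}_{L^2(0,L)}^2+L^2\norm{u_x}_{L^2(0,L)}^2\Bigr)
\end{equation*}
with an explicit constant $c_0$; this is exactly where the factor $1/N^2$ enters. Inserting this bound (together with a Young split of the cross term in the feedback identity) and collecting terms produces a differential inequality of the form
\begin{equation*}
\tfrac12\tfrac{d}{dt}\norm{u}_{L^2(0,L)}^2+\Bigl(\lambda-\tfrac{C\mu L^2}{N^2}\Bigr)\norm{u_x}_{L^2(0,L)}^2+\Bigl(\tfrac{\mu}{4}-\gamma-\tfrac{\mu}{4N^2}\Bigr)\norm{u}_{L^2(0,L)}^2\le 0 .
\end{equation*}

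Finally the two conditions in \eqref{Assm1} are used for their respective purposes: $\tfrac{1}{N^2}<\tfrac{4\lambda}{\mu L^2}$ makes the coefficient of $\norm{u_x}_{L^2(0,L)}^2$ nonnegative so that term can be dropped, and $\tfrac1{N^2}<1-\tfrac{4\gamma}{\mu}$ is precisely the statement that the surviving coefficient of $\norm{u}_{L^2(0,L)}^2$, namely $\tfrac{\mu}{2}\bigl(\tfrac12-\tfrac{2\gamma}{\mu}-\tfrac1{2N^2}\bigr)$, is strictly positive. Gronwall's inequality then gives $\norm{u(t)}_{L^2(0,L)}^2\le e^{-\mu(\frac12-\frac{2\gamma}{\mu}-\frac1{2N^2})t}\norm{u_0}_{L^2(0,L)}^2$. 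I expect the only genuinely delicate step to be the Poincaré-type estimate for $u-I_N u$: one must choose the interpolation constant (equivalently the mesh size $h=L/N$) sharply enough that the smallness threshold on $N$ comes out to be \eqref{Assm1} rather than something worse. The multiplier manipulations of the first step and the Gronwall argument at the end are routine, and, as noted in the opening remark, the standard approximate-solutions-and-limit procedure needed to justify the energy identity introduces no additional difficulty because the controller term is benign.
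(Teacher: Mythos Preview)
Your approach is essentially the paper's: take the $L^2$ inner product with $\bar u$, split the feedback term as $I_h(u)\bar u=(I_h(u)-u)\bar u+|u|^2$, control the remainder via the Bramble--Hilbert/Poincar\'e bound $\|u-I_h(u)\|_{L^2}\le h\|u\|_{H^1}$ with $h=L/N$, and read off the two conditions in \eqref{Assm1} from the resulting coefficients before applying Gronwall. One small inconsistency is worth pointing out: the orthogonality identity you write, $-\mu\,\mathrm{Re}\int I_N u\,\bar u=-\mu\|I_N u\|_{L^2}^2=-\mu\|u\|_{L^2}^2+\mu\|u-I_N u\|_{L^2}^2$, is actually \emph{sharper} than what the paper does (Cauchy--Schwarz followed by Young on the cross term), and if you carried it through you would obtain the larger damping coefficient $\mu-\gamma-\mu/N^2$ in front of $\|u\|_{L^2}^2$ (at the price of the stricter diffusion condition $1/N^2<\lambda/(\mu L^2)$), not the $\mu/4-\gamma-\mu/(4N^2)$ you display. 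Your stated final rate $\mu\bigl(\tfrac12-\tfrac{2\gamma}{\mu}-\tfrac{1}{2N^2}\bigr)$ and the thresholds in \eqref{Assm1} correspond to the Cauchy--Schwarz/Young route (your parenthetical ``Young split''), which is exactly the paper's argument.
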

\begin{proof}Taking the $L^2$-inner product of $\eqref{a1}$ with $u$ we get

\begin{multline}\label{a3}
\int_{0} ^{L} u_{t}\bar udx+ (\lambda +i\alpha)\int_{0}^{L}|u_{x} |^2 dx+(\kappa+i\beta)\int_{0}^{L}|u|^{p+2} dx-\gamma\int_{0}^{L}|u|^2dx\\=-\mu\int_{0}^{L}I_h (u)\bar udx,
\end{multline} where $I_h(u)\equiv \sum_{k=1}^{N}\overline{u}_k\chi_{J_k}(x)$.  The feedback operator $I_h$ is indeed an interpolant operator approximating the inclusion $H^1(0,T)\hookrightarrow L^2(0,L)$.  More precisely, the following Bramble-Hilbert type inequality (see \cite[Proposition 2.1]{AT2014}) holds true. \begin{equation}\label{interpolantineq}\|u-I_h(u)\|_{L^2(0,L)}\le h\|u\|_{H^1(0,L)}\end{equation} where $h=\frac{L}{N}$ is the step size.
Writing
$$I_h(u)\overline{u}=(I_h(u)-u)\bar u+|u|^2,$$ taking two times the real part of  \eqref{a3}, and using the Cauchy-Schwartz inequality, we obtain
\begin{multline}
\frac{d}{dt}\int_{0} ^{L} |u|^2 dx+ 2\lambda \int_{0}^{L}|u_{x}| ^2 dx+2\kappa\int_{0}^{L}|u|^{p+2} dx-2\gamma\int_{0}^{L}|u|^2dx\\
\leq-\mu\int_{0}^{L} |u|^2 dx+\mu\left(\int_{0}^{L}|u|^2 dx\right)^{\frac{1}{2}}\left(\int_{0}^{L}|u-I_h(u)|^2 dx\right)^{\frac{1}{2}}.
\end{multline}
Applying Young's inequality,
\begin{multline}
\frac{d}{dt}\int_{0} ^{L} |u|^2 dx+ 2\lambda \int_{0}^{L}|u_{x}| ^2 dx+2\kappa\int_{0}^{L}|u|^{p+2} dx-2\gamma\int_{0}^{L}|u|^2dx\\
\leq-\mu\int_{0}^{L} |u|^2 dx+\frac{\mu}{2}\int_{0}^{L}|u|^2 dx+\frac{\mu}{2}\|u-I_h(u)\|_{L^2(0,L)}^2 .
\end{multline}
Using the definition of the $H^1(0,L)$-norm and the inequality \eqref{interpolantineq} we obtain,
\begin{multline}\label{star}
\frac{d}{dt}\int_{0} ^{L} |u|^2 dx+ 2\lambda \int_{0}^{L}|u_{x}| ^2 dx+2\kappa\int_{0}^{L}|u|^{p+2} dx-2\gamma\int_{0}^{L}|u|^2dx\\
\leq-\frac{\mu}{2}\int_{0}^{L}| u|^2 dx+\mu\frac{h^2}{2}\left(\frac{1}{L^2}\int_{0}^{L}|u|^2 dx+\int_{0}^{L}|u_x| ^2 dx\right).
\end{multline}  Summing up the terms in the above inequality, we get
\begin{multline}\label{star}
\frac{d}{dt}\int_{0} ^{L} |u|^2 dx+ \left(2\lambda-\frac{\mu h^2}{2}\right) \int_{0}^{L}|u_{x}| ^2 dx+\left(-2\gamma+\frac{\mu}{2}-\frac{\mu h^2}{2L^2}\right)\int_{0}^{L}|u|^2dx\\
\le -2\kappa\int_{0}^{L}|u|^{p+2} dx.
\end{multline}  Since $\kappa>0$, we have
\begin{equation}
\frac{d}{dt}\int_{0} ^{L} |u|^2 dx+\mu(-\frac{2\gamma}{\mu}+\frac{1}{2}-\frac{h^2}{2 L^2})\int_{0}^{L}|u|^2 dx+\mu(\frac{2\lambda}{\mu}-\frac{ h^2}{2})\int_{0}^{L}|u_x| ^2 dx\leq 0.
\end{equation}
Setting $\displaystyle\nu:=-\frac{2\gamma}{\mu}+\frac{1}{2}-\frac{h^2}{2 L^2},$ and $\displaystyle m:=\frac{2\lambda}{\mu}-\frac{h^2}{2}$, we can write
\begin{equation}
\frac{d}{dt}\|u(t)\|_{L^2(0,L)} ^2+\mu\nu\left(\|u(t)\|_{L^2(0,L)} ^2 +\frac{m}{\nu}\|u_x(t)\|_{L^2(0,L)} ^2\right)\leq 0.
\end{equation} By assumption \eqref{Assm1}, we can drop the last term at the left-hand side of the above inequality and deduce the rapid decay of solutions in the $L^2-$sense:
$$\|u(t)\|_{L^2(0,L)} ^2\leq e^{-{\mu \nu t}}\|u(0)\|_{L^2(0,L)} ^2$$ for $t\ge 0$.
\end{proof}

\begin{rem}Note that the Neumann boundary condition does not play a major role here.  The same result also holds for Dirichlet or mixed boundary conditions.\end{rem}
\section{Stabilization with finitely many Fourier modes}
In this section, we consider the Ginzburg-Landau equation with finitely many Fourier modes feedback control on a bounded domain $\Omega\subset \mathbb{R}^n$ with homogeneous Dirichlet boundary conditions at both ends of the domain:

\begin{equation}\label{a1fm}
u_t-(\lambda+i\alpha)\Delta u+(\kappa+i\beta)|u|^p u-\gamma u=-\mu\sum_{k=1}^{N}(u,\omega_k)\omega_k,\quad x\in \Omega,t>0,
\end{equation}
\begin{equation}\label{a2fm}
u|_{\partial \Omega}=0, \quad x\in\partial\Omega, t>0,
\end{equation}
\begin{equation}\label{a3fm}
u(x,0)=u_0(x),\quad x\in \Omega,
\end{equation} where $\lambda,\kappa,\gamma>0,$ $\alpha,\beta\in \mathbb{R}$ and $\omega_k$'s denote the orthonormal set of eigenfunction of $-\Delta$ in $L^2(\Omega)$ with the respective eigenvalues $\lambda_k$.  It is well known that $\lambda_k$'s satisfy $0<\lambda_1\le \lambda_2\le ...\le \lambda_k\le\lambda_{k+1}\le...$ and $\lambda_k\rightarrow \infty$ as $k\rightarrow \infty$.\\

We prove the stabilization at both $L^2$ and $H^1$ levels.

\begin{thm}[$L^2$-decay]Let $u$ be a sufficiently smooth solution of \eqref{a1fm}-\eqref{a3fm} with $\mu\ge \gamma$, and $N$ be big enough that $\displaystyle\lambda_{N+1}>\frac{\gamma}{\lambda}$.  Then, there exists $\omega>0$ such that $$\|u(t)\|_{L^2(\Omega)}^2\leq e^{-\omega t}\|u_0\|_{L^2(\Omega)}^2$$ for $t\ge 0$. Indeed, we have $\omega=2(\lambda-\gamma\lambda_{N+1}^{-1})\lambda_1.$\end{thm}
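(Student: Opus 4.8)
The plan is to take the $L^2(\Omega)$-inner product of \eqref{a1fm} with $u$, take twice the real part, and use the dissipative structure of the diffusion term together with the control term. The diffusion term $(\lambda+i\alpha)\int_\Omega \Delta u\,\bar u\,dx$ contributes $-2\lambda\|\nabla u\|_{L^2}^2$ to the real part (after integration by parts, using the Dirichlet condition \eqref{a2fm} to kill the boundary terms), the nonlinearity contributes $-2\kappa\|u\|_{L^p+2}^{p+2}\le 0$ which we simply discard since $\kappa>0$, and we are left with
\begin{equation*}
\frac{d}{dt}\|u\|_{L^2(\Omega)}^2 + 2\lambda\|\nabla u\|_{L^2(\Omega)}^2 - 2\gamma\|u\|_{L^2(\Omega)}^2 \le -2\mu\sum_{k=1}^N |(u,\omega_k)|^2.
\end{equation*}

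Next I would split $u$ into low and high Fourier modes: write $u = P_N u + Q_N u$ where $P_N u = \sum_{k=1}^N (u,\omega_k)\omega_k$ and $Q_N u = \sum_{k>N}(u,\omega_k)\omega_k$. By Parseval, $\|u\|_{L^2}^2 = \|P_N u\|_{L^2}^2 + \|Q_N u\|_{L^2}^2$ and the control term is exactly $-2\mu\|P_N u\|_{L^2}^2$. The key spectral gap estimate is $\|\nabla Q_N u\|_{L^2}^2 \ge \lambda_{N+1}\|Q_N u\|_{L^2}^2$, which handles the high modes, while for the low modes we use $\|\nabla u\|_{L^2}^2 \ge \|\nabla P_N u\|_{L^2}^2 \ge \lambda_1\|P_N u\|_{L^2}^2$ together with the fact that $\mu\ge\gamma$ makes the control term dominate $2\gamma\|P_N u\|_{L^2}^2$. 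Combining: $2\lambda\|\nabla u\|_{L^2}^2 \ge 2\lambda\|\nabla Q_N u\|_{L^2}^2 \ge 2\lambda\lambda_{N+1}\|Q_N u\|_{L^2}^2$, and since $2\gamma\|Q_N u\|_{L^2}^2 \le 2\gamma\lambda_{N+1}^{-1}\|\nabla Q_N u\|_{L^2}^2 \le 2\gamma\lambda_{N+1}^{-1}\|\nabla u\|_{L^2}^2$, the high-mode part of $-2\gamma\|u\|_{L^2}^2$ is absorbed by a fraction of the diffusion term, leaving a coefficient $2(\lambda - \gamma\lambda_{N+1}^{-1}) > 0$ (positive precisely by the hypothesis $\lambda_{N+1} > \gamma/\lambda$) times $\|\nabla u\|_{L^2}^2$, which one then bounds below by $2(\lambda-\gamma\lambda_{N+1}^{-1})\lambda_1\|u\|_{L^2}^2$ via Poincaré. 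The low-mode part of $-2\gamma\|u\|_{L^2}^2$, namely $-2\gamma\|P_N u\|_{L^2}^2$, is dominated by the control term $-2\mu\|P_N u\|_{L^2}^2$ since $\mu\ge\gamma$, so that contribution is also nonpositive.

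Putting it all together yields
\begin{equation*}
\frac{d}{dt}\|u(t)\|_{L^2(\Omega)}^2 + 2(\lambda-\gamma\lambda_{N+1}^{-1})\lambda_1\|u(t)\|_{L^2(\Omega)}^2 \le 0,
\end{equation*}
and Gronwall's inequality gives the claimed decay with $\omega = 2(\lambda-\gamma\lambda_{N+1}^{-1})\lambda_1$. The only slightly delicate bookkeeping step — and the one I expect to require the most care — is splitting the $-2\gamma\|u\|_{L^2}^2$ term correctly between the low and high modes so that the high part is absorbed by diffusion and the low part by the feedback control; once that split is organized the rest is routine. It is also worth noting that, unlike in the volume-element case, here the $H^1$-norm is controlled directly by the spectral gap rather than a Bramble–Hilbert interpolation inequality, so no step-size smallness condition is needed — only that $N$ be large enough for the gap condition $\lambda_{N+1} > \gamma/\lambda$ to hold.
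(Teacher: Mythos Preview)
Your argument is correct and is essentially identical to the paper's proof: both take the $L^2$-inner product with $u$, split the $-2\gamma\|u\|_{L^2}^2$ term via Parseval into low and high Fourier modes, absorb the low modes with the control term (using $\mu\ge\gamma$) and the high modes with the diffusion via $\sum_{k>N}|(u,\omega_k)|^2\le\lambda_{N+1}^{-1}\|\nabla u\|_{L^2}^2$, then apply Poincar\'e and Gronwall. The only difference is notational---you phrase the splitting in terms of the projections $P_N u$, $Q_N u$ while the paper works directly with the Fourier sums---and your aside about $\|\nabla P_N u\|^2\ge\lambda_1\|P_N u\|^2$ is unnecessary (the low modes are handled entirely by the control), but this does not affect the argument.
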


\begin{thm}[$H^1$-decay]Let $u$ be a sufficiently smooth solution of \eqref{a1fm}-\eqref{a3fm} with $\mu\ge \gamma$ and $N$ be big enough that $\displaystyle\lambda_{N+1}>\frac{\gamma}{\lambda}$.  Then, for all $0<\delta<\omega=2(\lambda-\gamma\lambda_{N+1}^{-1})\lambda_1$, there exists some $C>0$ such that $$\|\nabla u(t)\|_{L^2(\Omega)}^2\leq Ce^{-\delta t}\|\nabla u_0\|_{L^2(\Omega)}^2$$ for $t\ge 0$ provided that $p<4/n$.  On the other hand, if $p=4/n$, the same result holds true if $\|u_0\|_{L^2(\Omega)}$ is sufficiently small.\end{thm}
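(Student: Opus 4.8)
The plan is to run an $H^1$ energy estimate: take the $L^2(\Omega)$-inner product of \eqref{a1fm} with $-\Delta u$ and pass to real parts, the $H^1$ analogue of the $L^2$ computation behind the previous theorem. Integrating by parts with \eqref{a2fm}, the time-derivative term produces $\frac12\frac{d}{dt}\|\nabla u\|_{L^2(\Omega)}^2$, the diffusion term produces $\lambda\|\Delta u\|_{L^2(\Omega)}^2$, the $-\gamma u$ term produces $-\gamma\|\nabla u\|_{L^2(\Omega)}^2$, and, since $(\omega_k,-\Delta u)=\lambda_k(\omega_k,u)$, the feedback term moves to the left as the nonnegative quantity $\mu\sum_{k=1}^N\lambda_k|(u,\omega_k)|^2$. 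This gives the identity
\[
\tfrac12\tfrac{d}{dt}\|\nabla u\|_{L^2(\Omega)}^2+\lambda\|\Delta u\|_{L^2(\Omega)}^2-\gamma\|\nabla u\|_{L^2(\Omega)}^2+\mu\sum_{k=1}^N\lambda_k|(u,\omega_k)|^2=\mathrm{Re}\!\left[(\kappa+i\beta)\int_\Omega|u|^p u\,\overline{\Delta u}\,dx\right].
\]
Two tasks remain: (i) dominate the destabilizing $-\gamma\|\nabla u\|_{L^2(\Omega)}^2$ using the feedback and part of the dissipation, and (ii) absorb the nonlinear term, where the condition on $p$ enters.

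For (i) I would use the spectral expansion $u=\sum_k(u,\omega_k)\omega_k$, so $\|\nabla u\|_{L^2(\Omega)}^2=\sum_k\lambda_k|(u,\omega_k)|^2$ and $\|\Delta u\|_{L^2(\Omega)}^2=\sum_k\lambda_k^2|(u,\omega_k)|^2$. Splitting the first sum at $k=N$, discarding the low modes by $\mu\ge\gamma$, and using $\lambda_k\le\lambda_{N+1}^{-1}\lambda_k^2$ for $k>N$,
\[
-\gamma\|\nabla u\|_{L^2(\Omega)}^2+\mu\sum_{k=1}^N\lambda_k|(u,\omega_k)|^2\ge -\frac{\gamma}{\lambda_{N+1}}\|\Delta u\|_{L^2(\Omega)}^2 ,
\]
so a coefficient $\lambda-\gamma\lambda_{N+1}^{-1}>0$ (positive by hypothesis) survives in front of $\|\Delta u\|_{L^2(\Omega)}^2$; combined with $\|\Delta u\|_{L^2(\Omega)}^2\ge\lambda_1\|\nabla u\|_{L^2(\Omega)}^2$ this is precisely what yields the exponent $\omega=2(\lambda-\gamma\lambda_{N+1}^{-1})\lambda_1$, exactly as in the $L^2$-decay theorem but with one extra power of $\lambda_k$.

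For (ii) I bound the right-hand side of the identity crudely, without using its sign structure, by $\sqrt{\kappa^2+\beta^2}\,\|u\|_{L^{2(p+1)}(\Omega)}^{p+1}\|\Delta u\|_{L^2(\Omega)}$, and then interpolate $\|u\|_{L^{2(p+1)}(\Omega)}$ between $L^2(\Omega)$ and $H^2(\Omega)$ by Gagliardo-Nirenberg, replacing the $H^2$-norm by $\|\Delta u\|_{L^2(\Omega)}$ using elliptic regularity for the Dirichlet Laplacian together with $\|u\|_{L^2(\Omega)}\le\lambda_1^{-1}\|\Delta u\|_{L^2(\Omega)}$. This gives the nonlinear term $\le C\|\Delta u\|_{L^2(\Omega)}^{1+np/4}\|u\|_{L^2(\Omega)}^{p+1-np/4}$. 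If $p<4/n$, the power $1+np/4$ of $\|\Delta u\|_{L^2(\Omega)}$ is strictly less than $2$, so Young's inequality turns this into $\eps\|\Delta u\|_{L^2(\Omega)}^2+C_\eps\|u\|_{L^2(\Omega)}^{q'}$ with $q'>2$; taking $\eps$ small enough that $\lambda-\gamma\lambda_{N+1}^{-1}-\eps>0$, I absorb the first term into the surviving dissipation and, by the $L^2$-decay theorem, bound the second by $C_\eps e^{-\omega q' t/2}\|u_0\|_{L^2(\Omega)}^{q'}$. If $p=4/n$, the power is exactly $2$, so the nonlinear term is $\le C\|u(t)\|_{L^2(\Omega)}^{p}\|\Delta u\|_{L^2(\Omega)}^2$; since the $L^2$-decay yields $\|u(t)\|_{L^2(\Omega)}\le\|u_0\|_{L^2(\Omega)}$ for all $t$, choosing $\|u_0\|_{L^2(\Omega)}$ small makes the coefficient $\le\eps$ and the same absorption works with no remainder.

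Combining (i), (ii) and $\|\Delta u\|_{L^2(\Omega)}^2\ge\lambda_1\|\nabla u\|_{L^2(\Omega)}^2$, I arrive at a scalar inequality
\[
\frac{d}{dt}\|\nabla u(t)\|_{L^2(\Omega)}^2+\delta_\eps\|\nabla u(t)\|_{L^2(\Omega)}^2\le C_\eps e^{-ct}\|u_0\|_{L^2(\Omega)}^{q'}
\]
(the right-hand side being $0$ in the critical case), where $\delta_\eps=2(\lambda-\gamma\lambda_{N+1}^{-1}-\eps)\lambda_1\to\omega$ as $\eps\to0^+$ and $c=\omega q'/2>\omega>\delta_\eps$. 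Gronwall's lemma gives $\|\nabla u(t)\|_{L^2(\Omega)}^2\le Ce^{-\delta_\eps t}\big(\|\nabla u_0\|_{L^2(\Omega)}^2+\|u_0\|_{L^2(\Omega)}^{q'}\big)$, and Poincaré's inequality $\|u_0\|_{L^2(\Omega)}^2\le\lambda_1^{-1}\|\nabla u_0\|_{L^2(\Omega)}^2$ recasts the bracket as a constant (depending on the initial data) times $\|\nabla u_0\|_{L^2(\Omega)}^2$; for a prescribed $0<\delta<\omega$ one picks $\eps$ so small that $\delta_\eps>\delta$. I expect the main obstacle to be step (ii): one must exploit the full two-derivative dissipation $\|\Delta u\|_{L^2(\Omega)}^2$ --- working with $\|\nabla u\|_{L^2(\Omega)}^2$ alone would only reach $p<2/n$ --- and interpolate through $H^2$ to get the sharp threshold $p\le 4/n$, while relying on the already-established $L^2$-decay both to make the lower-order term exponentially small and, in the borderline case, to keep the nonlinear coefficient small uniformly in time.
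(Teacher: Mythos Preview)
Your proposal is correct and follows essentially the same approach as the paper: multiply by $-\Delta u$, handle the feedback and $-\gamma$ terms by spectral splitting at mode $N$ (your step (i) is verbatim the paper's argument), control the nonlinearity by Gagliardo--Nirenberg interpolation between $L^2$ and $H^2$ together with the already-proved $L^2$-decay, then apply Young and Gronwall. The only tactical difference is in step~(ii): the paper first integrates the nonlinear term by parts, uses that the $\kappa$-contribution has a good sign, and bounds only the $\beta$-contribution by $\tfrac{p|\beta|}{2}\|u\|_{L^{p+2}}^{p}\|\nabla u\|_{L^{p+2}}^{2}$ via H\"older and two separate Gagliardo--Nirenberg inequalities, whereas you bound the whole term by Cauchy--Schwarz as $\sqrt{\kappa^2+\beta^2}\,\|u\|_{L^{2(p+1)}}^{p+1}\|\Delta u\|_{L^2}$ and use a single interpolation; both routes yield the identical exponents $\|\Delta u\|_{L^2}^{1+np/4}\|u\|_{L^2}^{p+1-np/4}$, so the threshold $p\le 4/n$ and the rest of the argument are unchanged.
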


\subsection{Proof of $L^2$-stabilization}

Taking the $L^2$-inner product of $\eqref{a1fm}$ with $u$ we get

\begin{multline*}
\begin{split}
(u_t,u)+(\lambda & +i\alpha)\int_{0}^{L}|{\nabla u}|^2dx  +(\kappa+i\beta)\int_{0}^{L}|u|^{p+2} dx-\gamma\int_{0}^{L}|u|^2dx
\\ &=-\mu\sum_{k=1}^{N}(u,\omega_k)\int_{0}^{L}\omega_k\bar{u}dx
=-\mu\sum_{k=1}^{N}(u,\omega_k)(\omega_k,u)
=-\mu\sum_{k=1}^{N}|(u,\omega_k)|^2.
\end{split}
\end{multline*}
Taking two times the real part of  the above and using Parseval's identity, we obtain
\begin{multline}
\frac{d}{dt}\int_{\Omega} |u|^2 dx+ 2\lambda \int_{\Omega}|{\nabla u}| ^2 dx+2\kappa\int_{\Omega}|u|^{p+2} dx-2\gamma\sum_{k=1}^{N}|(u,\omega_k)|^2-2\gamma\sum_{k=N+1}^{\infty}|(u,\omega_k)|^2\\
\leq-2\mu\sum_{k=1}^{N}|(u,\omega_k)|^2.
\end{multline} Therefore, assuming $\mu\geq\gamma$, we have
\begin{multline}
\frac{d}{dt}\int_{\Omega} |u|^2 dx+ 2\lambda \int_{\Omega}|{\nabla u}| ^2 dx+2\kappa\int_{\Omega}|u|^{p+2} dx-2\gamma\sum_{k=N+1}^{\infty}|(u,\omega_k)|^2\\
\leq-2(\mu-\gamma)\sum_{k=1}^{N}|(u,\omega_k)|^2\leq 0.
\end{multline}  Since, $\kappa>0$, the above inequality reduces to
\begin{equation}\label{P1}
\frac{d}{dt}\|u(t)\|_{L^2(\Omega)}^2+2\lambda\|\nabla u\|_{L^2(\Omega)}^2-2\gamma\sum_{k=N+1}^{\infty}|(u,\omega_k)|^2\leq 0.
\end{equation}
Using the following Poincaré type inequality,
$$\sum_{k=N+1}^{\infty}|(u,\omega_k)|^2\leq\lambda_{N+1} ^{-1}\|\nabla u\|_{L^2(\Omega)}^2,$$
we get from \eqref{P1}:

\begin{equation}
\frac{d}{dt}\|u(t)\|_{L^2(\Omega)}^2+2(\lambda-\gamma\lambda_{N+1} ^{-1})\|\nabla u(t)\|_{L^2(\Omega)}^2\leq 0.
\end{equation}
We use the Poincaré inequality $\|\nabla u\|_{L^2(\Omega)}^2\geq \lambda_1\|u\|_{L^2(\Omega)}^2$ to get
\begin{equation}
\frac{d}{dt}\|u(t)\|_{L^2(\Omega)}^2+2(\lambda-\gamma\lambda_{N+1} ^{-1})\lambda_1\|u(t)\|_{L^2(\Omega)}^2\leq 0.
\end{equation}
Thus we have
\begin{equation}
\|u(t)\|_{L^2(\Omega)}^2\leq e^{-2(\lambda-\gamma\lambda_{N+1}^{-1})\lambda_1 t}\|u_0\|_{L^2(\Omega)}^2.
\end{equation}\\

\subsection{Proof of $H^1$-stabilization}  We will consider the case $p< \frac{4}{n}$.
Taking the $L^2$-inner product of \eqref{a1fm} with $-\Delta u$ we get
\begin{multline}
-\int_{\Omega}u_t\Delta \bar{u}dx+(\lambda +i\alpha)\int_{\Omega}|{\Delta u}|^2dx-(\kappa+i\beta)\int_{\Omega}|u|^{p}u\Delta\bar{u}dx+\gamma\int_{\Omega}u\Delta \bar{u}dx\\
=\mu\sum_{k=1}^{N}(u,\omega_k)\int_{\Omega}\omega_k\Delta\bar{u}dx.
\end{multline}
Integrating by parts and using the fact that $w_k$ is an eigenfunction of the $-\Delta$ under homogeneous Dirichlet boundary condition with eigenvalue $\lambda_k$, we have
\begin{multline}\label{IBP1}
\int_{\Omega}\nabla u_{t}\nabla\bar{u}dx+(\lambda +i\alpha)\int_{\Omega}|{\Delta u}|^2dx\\
+(\kappa+i\beta)\int_{\Omega}\left(\frac{p+2}{2}|u|^{p}|{\nabla u}|^2+\frac{p}{2}|u|^{p-2}u^2(\nabla \bar{u})^2\right)dx\\
-\gamma\int_{\Omega}|{\nabla u}|^2dx=-\mu\sum_{k=1}^{N}|(u,\omega_k)|^2\lambda_k,
\end{multline} where $(\nabla \bar{u})^2:=\nabla \bar{u}\cdot \nabla \bar{u}.$

Taking two times the real part of both sides of \eqref{IBP1},
\begin{multline}
\frac{d}{dt}\int_{\Omega}|{\nabla u}|^2dx+2\lambda\int_{\Omega}|{\Delta u}|^2dx\\
+2\kappa\text{Re}\int_{\Omega}\left(\frac{p+2}{2}|u|^{p}|{\nabla u}|^2+\frac{p}{2}|u|^{p-2}u^2(\nabla \bar{u})^2\right)dx\\
-2\beta\text{Im}\int_{\Omega}\left(\frac{p+2}{2}|u|^{p}|{\nabla u}|^2+\frac{p}{2}|u|^{p-2}u^2(\nabla \bar{u})^2\right)dx\\
-2\gamma\int_{\Omega}|{\nabla u}|^2dx=-2\mu\sum_{k=1}^{N}|(u,\omega_k)|^2\lambda_k.
\end{multline}

Note that since $\kappa>0$, we have

\begin{multline}\kappa\text{Re}\int_{\Omega}\left(\frac{p+2}{2}|u|^{p}|{\nabla u}|^2+\frac{p}{2}|u|^{p-2}u^2(\nabla \bar{u})^2\right)dx\\
\ge \kappa\int_{\Omega}\left(\frac{p+2}{2}-\frac{p}{2}\right)|u|^{p}|{\nabla u}|^2dx\ge 0.\end{multline}

On the other hand,
\begin{multline}\label{nonlinterm}
-\beta\text{Im}\left[\int_{\Omega}\left(\frac{p+2}{2}|u|^{p}|{\nabla u}|^2+\frac{p}{2}|u|^{p-2}u^2(\nabla \bar{u})^2\right)dx\right]\\
=-\frac{p\beta}{2}\text{Im}\int_{\Omega}|u|^{p-2}u^2(\nabla \bar{u})^2dx\le \frac{p\beta}{2}\int_{\Omega}|u|^{p}|{\nabla u}|^2dx\\
\le \frac{p\beta}{2}\|u\|_{L^{p+2}(\Omega)}^{p}\|\nabla u\|_{L^{p+2}(\Omega)}^2.
\end{multline}

We recall the following Gagliardo-Nirenberg inequalities which are true for $p\le \frac{4}{n-2}$ in dimensions $n\ge 3$ and for $p<\infty$ in dimensions $n\le 2$:

\begin{equation}\label{gag1}
  \|u\|_{L^{p+2}(\Omega)}\le C\|u\|_{H^2(\Omega)}^{\theta}\|u\|_{L^2(\Omega)}^{1-\theta} \text { where } \theta=\frac{np}{4(p+2)}\in \left(0,\frac{1}{2}\right],
\end{equation}

\begin{equation}\label{gag2}
  \|\nabla u\|_{L^{p+2}(\Omega)}\le C\|u\|_{H^2(\Omega)}^{\xi}\|u\|_{L^2(\Omega)}^{1-\xi} \text { where } \xi=\frac{(n+2)p+4}{4(p+2)}\in \left[0,\frac{1}{2}\right).
\end{equation}

Now, we use \eqref{gag1} and \eqref{gag2} in \eqref{nonlinterm}.  We get

\begin{equation}\label{nonlinterm2}
\frac{p\beta}{2}\|u\|_{L^{p+2}(\Omega)}^{p}\|\nabla u\|_{L^{p+2}(\Omega)}^2\le C\|u\|_{H^2(\Omega)}^{1+a}\|u\|_{L^2(\Omega)}^{1+b}
\end{equation} where

$$a=p\theta+2\xi-1=\frac{np}{4}\in (0,1]$$ and $$b=(1-\theta)p+2(1-\xi)-1=\frac{(4-n)p}{4}\in (-1,3].$$

It is well known that \begin{equation}\label{h2est}\|u\|_{H^2(\Omega)}\le C\|\Delta u\|_{L^2(\Omega)},\end{equation} from which it follows that

\begin{equation}
  \|u\|_{H^2(\Omega)}^{1+a}\le C\|\Delta u\|_{L^2(\Omega)}^{1+a}.
\end{equation}

Hence, the right-hand side of \eqref{nonlinterm2} is bounded by
\begin{equation}
  C\|\Delta u\|_{L^2(\Omega)}^{1+a}\|u\|_{L^2(\Omega)}^{1+b}.
\end{equation}  Combining this with $L^2-$stabilization result we can bound the above by

$$C\|\Delta u\|_{L^2(\Omega)}^{1+a}\|u_0\|_{L^2(\Omega)}^{1+b}e^{-\omega(1+b)t}.$$

Now, if $p<\frac{4}{n}$, then $1+a<2$ and the above term can be estimated as

$$C\|\Delta u\|_{L^2(\Omega)}^{1+a}\|u_0\|_{L^2(\Omega)}^{1+b}e^{-\omega(1+b)t}\le \epsilon\|\Delta u\|_{L^2(\Omega)}^2+C_\epsilon\|u_0\|_{L^2(\Omega)}^{\frac{2(1+b)}{1-a}}e^{-\frac{2\omega(1+b)}{1-a}t}$$ where $\epsilon>0$ denotes a fixed generic constant which can be chosen as small as we wish.

Hence, \eqref{nonlinterm2} becomes

\begin{multline}\label{nonlineest3}
  \frac{p\beta}{2}\|u\|_{L^{p+2}(\Omega)}^{p}\|\nabla u\|_{L^{p+2}(\Omega)}^2\\
  \le\epsilon\|\Delta u\|_{L^2(\Omega)}^2+C_\epsilon\|u_0\|_{L^2(\Omega)}^{\frac{2(1+b)}{1-a}}e^{-\frac{2\omega(1+b)}{1-a}t}
  \le \epsilon\|\Delta u\|_{L^2(\Omega)}^2+C_{u_0}e^{-\omega\zeta t},
\end{multline} where $$\zeta\equiv \frac{2(1+b)}{1-a}> 2.$$

Using \eqref{nonlineest3} and employing Parseval's identity for the derivative and using the fact that both $p$ and $\kappa$ are positive in \eqref{IBP1}, we obtain

\begin{multline}
\frac{d}{dt}\int_{\Omega}|{\nabla u}|^2dx+\left(2\lambda-\epsilon\right)\int_{\Omega}|{\Delta u}|^2dx-2\gamma\sum_{k=1}^{N}|(u,\omega_k)|^2\lambda_k\\
-2\gamma\sum_{k=N+1}^{\infty}|(u,\omega_k)|^2\lambda_k
\leq-2\mu\sum_{k=1}^{N}|(u,\omega_k)|^2\lambda_k+C_{u_0}e^{-\omega\zeta t}.
\end{multline}  Summing up the terms and assuming $\mu\ge \gamma$, we have
\begin{multline*}
\frac{d}{dt}\int_{\Omega}|{\nabla u}|^2dx+\left(2\lambda-\epsilon\right)\int_{\Omega}|{\Delta u}|^2dx\\
\leq 2(\gamma-\mu)\sum_{k=1}^{N}|(u,\omega_k)|^2\lambda_k+2\gamma\sum_{k=N+1}^{\infty}|(u,\omega_k)|^2\lambda_k+C_{u_0}e^{-\omega\zeta t}
\\\le 2\gamma\lambda_{N+1}^{-1} \|{\Delta u}\|_{L^2(\Omega)}^2+C_{u_0}e^{-\omega\zeta t}.
\end{multline*}  Hence, using the Poincaré like inequality $\|\nabla u\|_{L^2(\Omega)}^2\le \lambda_1^{-1}\|\Delta u\|_{L^2(\Omega)}^2$, we obtain

\begin{equation}
\frac{d}{dt}\int_{\Omega}|{\nabla u}(x,t)|^2dx+(\omega-\epsilon)\|{\nabla u}(t)\|_{L^2(\Omega)}^2\leq C_{u_0}e^{-\omega\zeta t}.
\end{equation}
Integrating this inequality and using the fact that $\zeta>2$, the above yields
\begin{equation}
\int_{\Omega}|{\nabla u}(x,t)|^2dx\leq e^{-(\omega-\epsilon) t}\int_{\Omega}|\nabla u_0(x)|^2dx+C_{u_0}e^{-(\omega-\epsilon) t}
\end{equation} which proves the $H^1$ decay.  Note that the decay rate can be made arbitrarily close to $\omega$ but not exactly $\omega$.

Now, let us consider the case $p=4/n$.  In this case, $1+a=2$ and we can choose $\|u_0\|_{L^2(\Omega)}$ small enough that
\begin{equation}C\|\Delta u\|_{L^2(\Omega)}^{1+a}\|u_0\|_{L^2(\Omega)}^{1+b}e^{-\omega(1+b)t}
\le \epsilon\|\Delta u\|_{L^2(\Omega)}^{2}\end{equation} for $t\ge 0$.  Now, we can complete the rest of the proof similar to the case $p<4/n$.  Hence, the same decay rate estimate also holds for this case.

\section{Steering solutions}  In this section, we consider steering solutions of the CGLE via finite parameter feedback controllers to other solutions of the CGLE.  Of course, here the feedback controller depends on the target system.  In Section \ref{secsteer1}, we prove that an appropriate finite-parameter control can steer the solutions to any desired solution of the uncontrolled system. This has been recently shown in the context of damped wave equations \cite{KTPreprint}.  In Section \ref{secsteer2}, we choose the target system slightly different in such a way that its solution exponentially decays.  We show that the controlled solution also decays exponentially.
\subsection{Steering solutions to any solution}\label{secsteer1}
Suppose that $v$ be a desired solution of the non-controlled Ginzburg-Landau model below.
\begin{equation}\label{a1noncont}
v_t-(\lambda+i\alpha)\triangle v+(\kappa+i\beta)|v|^p v-\gamma v=0,\quad x\in \Omega, t>0,
\end{equation}
\begin{equation}\label{a2noncont}
v(x,t)=0,\quad x\in \partial\Omega, t>0,
\end{equation}where $\Omega\subset \mathbb{R}^n$ is a bounded domain with regular boundary,
$\lambda,\kappa,\gamma>0,$ $\alpha,\beta\in \mathbb{R}$, and $p>-1$.\\
Our aim is to find appropriate conditions on $\lambda$ and $N$ so that the solution of the controlled problem
\begin{equation}\label{a1a}
u_t-(\lambda+i\alpha)\triangle u+(\kappa+i\beta)|u|^p u-\gamma u=-\mu\sum_{k=1}^{N}(u-v,w_k)w_k,\quad x\in \Omega, t>0,
\end{equation}
\begin{equation}\label{a2a}
u(x,t)=0,\quad x\in \partial\Omega, t>0,
\end{equation}
approaches $v$ in the long-time.

\begin{thm}[Steering I]\label{steering}Let $v$ be a solution of the non-controlled system \eqref{a1noncont}-\eqref{a2noncont} and suppose $\mu\ge \gamma$, $\displaystyle\lambda_{N+1}>\frac{\gamma}{\lambda}$, and $\kappa\ge C_p^{-1}|\beta|$ for $C_p\equiv \frac{|p|}{2\sqrt{p+1}}$.  Then the solution of the controlled system $u$ \eqref{a1a}-\eqref{a2a} must converge to $v$ as $t$ increases in the sense
$$\|u(t)-v(t)\|_{L^2(\Omega)}^2\leq e^{-\omega t}\| u_0-v_0\|_{L^2(\Omega)}^2,$$ where $\omega=2(\lambda-\gamma\lambda_{N+1}^{-1})\lambda_1>0$.\end{thm}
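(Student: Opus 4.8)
The plan is to introduce the difference $w:=u-v$. Subtracting \eqref{a1noncont} from \eqref{a1a}, the function $w$ solves
\begin{equation*}
w_t-(\lambda+i\alpha)\triangle w+(\kappa+i\beta)\bigl(|u|^p u-|v|^p v\bigr)-\gamma w=-\mu\sum_{k=1}^{N}(w,w_k)w_k,\quad x\in\Omega,\ t>0,
\end{equation*}
together with $w|_{\partial\Omega}=0$ (since $u$ and $v$ both satisfy the homogeneous Dirichlet condition by \eqref{a2noncont} and \eqref{a2a}) and $w(\cdot,0)=u_0-v_0$. Taking the $L^2(\Omega)$ inner product with $w$, integrating the Laplacian term by parts, and then taking twice the real part, one arrives at
\begin{equation*}
\frac{d}{dt}\|w\|_{L^2(\Omega)}^2+2\lambda\|\nabla w\|_{L^2(\Omega)}^2+2\,\mathrm{Re}\Bigl[(\kappa+i\beta)\int_\Omega\bigl(|u|^p u-|v|^p v\bigr)\bar w\,dx\Bigr]-2\gamma\|w\|_{L^2(\Omega)}^2=-2\mu\sum_{k=1}^{N}|(w,w_k)|^2,
\end{equation*}
where the right-hand side arises exactly as in the $L^2$-stabilization proof of the previous section, because the controller is the $L^2$-projection of $w=u-v$ onto the span of $w_1,\dots,w_N$. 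As noted in the Remark of the Introduction, this multiplier computation is rigorously justified by carrying it out on approximate (Galerkin or Yosida-regularized) solutions and then passing to the limit in the energy estimate.

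The decisive step, and the only one I expect to pose any real difficulty, is to show that the nonlinear contribution is nonnegative, i.e.
\begin{equation*}
\mathrm{Re}\Bigl[(\kappa+i\beta)\bigl(|z_1|^p z_1-|z_2|^p z_2\bigr)\overline{(z_1-z_2)}\Bigr]\ge 0\qquad\text{for all }z_1,z_2\in\mathbb C,
\end{equation*}
so that after integration $\mathrm{Re}[(\kappa+i\beta)\int_\Omega(|u|^pu-|v|^pv)\bar w\,dx]\ge0$ and the term can simply be discarded. This pointwise inequality encodes the dissipativity (accretivity) of the complex power nonlinearity: the map $z\mapsto|z|^p z$ is monotone as a map of $\mathbb R^2$, so $\mathrm{Re}[(|z_1|^p z_1-|z_2|^p z_2)\overline{(z_1-z_2)}]\ge0$, while the imaginary part of the same quantity is controlled by a constant multiple of this nonnegative real part; combining the two with the hypothesis $\kappa\ge C_p^{-1}|\beta|$ (and with $p>-1$, which is what makes $C_p=\tfrac{|p|}{2\sqrt{p+1}}$ finite and positive) forces the displayed sign condition. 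Apart from this ingredient, the argument is a verbatim repetition of the earlier $L^2$ proof with $u$ replaced by $w$; note in particular that for the un-differenced equation the $\beta$-term vanished outright upon taking real parts of $(\kappa+i\beta)\int|u|^{p+2}dx$, whereas here $|u|^pu-|v|^pv$ is not a scalar multiple of $u-v$, so the rotation of the nonlinearity no longer disappears for free.

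With the nonlinear term dropped, the resulting differential inequality
\begin{equation*}
\frac{d}{dt}\|w\|_{L^2(\Omega)}^2+2\lambda\|\nabla w\|_{L^2(\Omega)}^2-2\gamma\|w\|_{L^2(\Omega)}^2+2\mu\sum_{k=1}^{N}|(w,w_k)|^2\le0
\end{equation*}
is handled exactly as in the Fourier-mode section: expand $\|w\|_{L^2(\Omega)}^2=\sum_{k\ge1}|(w,w_k)|^2$ by Parseval, use $\mu\ge\gamma$ to drop the nonnegative low-mode term $2(\mu-\gamma)\sum_{k=1}^{N}|(w,w_k)|^2$, bound the tail by the Poincaré-type estimate $\sum_{k\ge N+1}|(w,w_k)|^2\le\lambda_{N+1}^{-1}\|\nabla w\|_{L^2(\Omega)}^2$, and then invoke $\|\nabla w\|_{L^2(\Omega)}^2\ge\lambda_1\|w\|_{L^2(\Omega)}^2$. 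This yields
\begin{equation*}
\frac{d}{dt}\|w\|_{L^2(\Omega)}^2+2(\lambda-\gamma\lambda_{N+1}^{-1})\lambda_1\,\|w\|_{L^2(\Omega)}^2\le0,
\end{equation*}
where the coefficient $\omega=2(\lambda-\gamma\lambda_{N+1}^{-1})\lambda_1$ is positive precisely because of the assumption $\lambda_{N+1}>\gamma/\lambda$. Gronwall's inequality then gives $\|u(t)-v(t)\|_{L^2(\Omega)}^2=\|w(t)\|_{L^2(\Omega)}^2\le e^{-\omega t}\|u_0-v_0\|_{L^2(\Omega)}^2$, which is the assertion.
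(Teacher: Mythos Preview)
Your proposal is correct and follows essentially the same approach as the paper: subtract the two equations, multiply by $\bar w$, take twice the real part, show the nonlinear contribution is nonnegative under the hypothesis $\kappa\ge C_p^{-1}|\beta|$, and then repeat the Fourier-mode/Poincar\'e argument verbatim. The only cosmetic difference is that the paper invokes a specific cited inequality, $\bigl|\mathrm{Im}\bigl(|u|^{p}u-|v|^{p}v,\,u-v\bigr)\bigr|\le C_p\,\mathrm{Re}\bigl(|u|^{p}u-|v|^{p}v,\,u-v\bigr)$ with $C_p=\tfrac{|p|}{2\sqrt{p+1}}$, to dispatch the nonlinear term, which is exactly the ``imaginary part controlled by the real part'' statement you describe in words.
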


\begin{proof}
Subtracting \eqref{a1noncont} from \eqref{a1a}  we get

\begin{equation}\label{z1}
z_t-(\lambda+i\alpha)\triangle z+(\kappa+i\beta)(|u|^p u-|v|^pv)-\gamma z=
-\mu\sum_{k=1}^{N}(z,w_k)w_k,\quad x\in \Omega, t>0,
\end{equation}

\begin{equation}\label{z2}
z(x,t)=0,\quad x\in \partial\Omega, t>0,
\end{equation}
where $z=u-v.$


Multiplying equation \eqref{z1} by $\overline z$, integrating over $\Omega$, and then taking two times the real part:
\begin{equation}\label{en1}
\frac {d}{d t}\|z(t)\|_{L^2(\Omega)}^2+2\lambda\|\nabla z\|_{L^2(\Omega)}^2+J_0-2\gamma \|z\|^2=-2\mu\sum\limits_{k=1}^ N |(z,w_k)|^2.
\end{equation}
where
$$
J_0=2 Re\left( (\kappa+i\beta)\int_{\Omega} (|u|^p u-|v|^pv)\bar{z}dx\right)
$$
It is clear that
\begin{equation}\label{zz1}
J_0=2 \kappa Re\left( \int_{\Omega} (|u|^p u-|v|^pv)\bar{z}dx\right)-2\beta Im\left(  \int_{\Omega} (|u|^p u-|v|^pv)\bar{z}dx\right)
\end{equation}

Applying the inequality (see \cite[Lemma 2.1]{OY2002})
$${|Im\left(|u|^{p}u-|v|^{p}v,u-v\right)|}\leq\overbrace{\frac{|p|}{2\sqrt{p+1}}}^{C_p}{Re\left(|u|^{p}u-|v|^{p}v,u-v\right)},$$
we see that if $\kappa\ge |\beta|C_p^{-1}$ then $J_0\geq 0.$ Hence we obtain (using Parseval's once again)
\begin{equation}
\frac{d}{d t}\|z(t)\|_{L^2(\Omega)}^2+2\lambda\|\nabla z\|_{L^2(\Omega)}^2+2(\mu-\gamma)\sum\limits_{k=1}^ N |(z,w_k)|^2-2\gamma\sum\limits_{k=N+1}^ {\infty} |(z,w_k)|^2\leq 0
\end{equation}
Using the following inequality,
$$\sum\limits_{k=N+1}^ {\infty} |(z,w_k)|^2\leq\lambda_{N+1}^{-1}\|\nabla z\|_{L^2(\Omega)}^2$$
we get
\begin{equation}
\frac{d}{d t}\|z(t)\|_{L^2(\Omega)}^2+2\lambda\|\nabla z(t)\|_{L^2(\Omega)}^2-2\gamma\lambda_{N+1}^{-1}\|\nabla z(t)\|_{L^2(\Omega)}^2 \leq0.
\end{equation} Therefore
\begin{equation}
\frac{d}{d t}\|z(t)\|_{L^2(\Omega)}^2+2(\lambda-\gamma\lambda_{N+1}^{-1})\|\nabla z(t)\|_{L^2(\Omega)}^2 \leq0.
\end{equation}
Now, by Poincaré  inequality,
\begin{equation}\label{zineq}
\frac{d}{d t}\|z(t)\|_{L^2(\Omega)}^2+\omega\| z(t)\|_{L^2(\Omega)}^2 \leq0.
\end{equation}
Thus, we have
\begin{equation}
\|z(t)\|_{L^2(\Omega)}^2\leq e^{-\omega t}\| z_0\|_{L^2(\Omega)}^2.
\end{equation}
\end{proof}
\subsection{Steering solutions to an exponential decay}\label{secsteer2}  Note that there is no guarantee that the solution of the uncontrolled system in \eqref{a1noncont} is a decaying or even a stable solution since we do not know the relationship among the given parameters.  Therefore, a better approach might be to start with a solution which is already known to be stable, e.g., an exponentially decaying solution.  Therefore, we can start with first considering the system below
\begin{equation}\label{stable1}
v_t-(\lambda+i\alpha)\triangle v+(\kappa+i\beta)|v|^p v-\tilde{\gamma} v=0,\quad x\in \Omega, t>0,
\end{equation} where $\tilde{\gamma}<\lambda\lambda_1$, again under the homogeneous Dirichlet boundary condition.  It is easy to see that multiplying \eqref{stable1} by $\bar{v}$, integrating over $\Omega$, and taking two times the real part, one obtains
\begin{equation*}
  \frac{d}{dt}\int_\Omega |v|^2 dx+ 2\lambda \int_\Omega|\nabla v| ^2 dx+2\kappa\int_\Omega|v|^{p+2} dx-2\tilde{\gamma}\int_\Omega|v|^2dx = 0.
\end{equation*}  Now, using the Poincaré  ineqality and the fact that $\kappa>0$, it follows that
\begin{equation*}
  \frac{d}{dt}\int_\Omega |v|^2 dx + 2\left(\lambda \lambda_1-\tilde{\gamma}\right) \int_\Omega|v| ^2 dx\le 0,
\end{equation*}  from which it is easy to deduce the exponential decay estimate $$\|v(t)\|_{L^2(\Omega)}\le \|v_0\|_{L^2(\Omega)}e^{-\left(\lambda \lambda_1-\tilde{\gamma}\right)t}$$ for $t\ge 0$.  Now, let us consider the feedback control system \eqref{a1a} where $v$ is a solution of \eqref{stable1} instead of \eqref{a1noncont}.  In this case, \eqref{zineq} takes the form
$$\frac{d}{d t}\|z(t)\|_{L^2(\Omega)}^2+\omega\| z(t)\|_{L^2(\Omega)}^2 \leq -2(\tilde{\gamma}-\gamma)\text{Re}\int_\Omega v\bar{z}dx.$$  Now, using $\epsilon$-Young's inequality at the right-hand side, we get
\begin{equation*}
  \frac{d}{d t}\|z(t)\|_{L^2(\Omega)}^2+\omega\| z(t)\|_{L^2(\Omega)}^2 \leq \epsilon \|z(t)\|_{L^2(\Omega)}^2 + \frac{|\tilde{\gamma}-\gamma|^2}{\epsilon} \|v(t)\|_{L^2(\Omega)}^2.
\end{equation*} where $\epsilon>0$ is a fixed, small number.  Multiplying both sides by $e^{(\omega-\epsilon)t}$, using the decay estimate on $v$, and then integrating over $(0,t)$, we obtain
\begin{equation*}
  \|z(t)\|_{L^2(\Omega)}^2 \le e^{-(\omega-\epsilon)t}\left(\|z_0\|_{L^2(\Omega)}^2+\frac{|\tilde{\gamma}
  -\gamma|^2}{\epsilon}\|v_0\|_{L^2(\Omega)}^2\overbrace{\int_0^te^{(\tilde{\omega}-\epsilon)s}ds}^{\frac{1}{(\tilde{\omega}-\epsilon)}[e^{(\tilde{\omega}-\epsilon)t}-1]}\right),
\end{equation*}  where $\tilde{\omega}:=2(\tilde{\gamma}-\gamma\lambda_{N+1}^{-1}\lambda_1)$.  That is,
\begin{multline}
  \|z(t)\|_{L^2(\Omega)}^2 \\
  \le \|z_0\|_{L^2(\Omega)}^2e^{-(\omega-\epsilon)t}+\frac{|\tilde{\gamma}
  -\gamma|^2}{\epsilon}\|v_0\|_{L^2(\Omega)}^2{\frac{1}{(\tilde{\omega}-\epsilon)}[e^{2(\tilde{\gamma}-\lambda\lambda_1)t}-e^{-(\omega-\epsilon)t}]}
\end{multline} for $t\ge 0$.

Now, there are two cases: Case I ($\tilde{\omega}>0$) and Case II ($\tilde{\omega}\le 0$).  Case I is satisfied if $\tilde{\gamma}>\gamma\lambda_{N+1}^{-1}\lambda_1$ and Case II is satisfied if $\tilde{\gamma}\le\gamma\lambda_{N+1}^{-1}\lambda_1$.  It is easy to observe that the previous assumption $\displaystyle\lambda_{N+1}>\frac{\gamma}{\lambda}$ implies $\gamma\lambda_{N+1}^{-1}\lambda_1<\lambda\lambda_1$. Now, recalling that we also have $\tilde{\gamma}<\lambda\lambda_1$, we can say that
Case I is satisfied if $\tilde{\gamma}\in \left(\gamma\lambda_{N+1}^{-1}\lambda_1,\lambda\lambda_1\right)$ and Case II is satisfied if $\tilde{\gamma}\in \left(-\infty,\gamma\lambda_{N+1}^{-1}\lambda_1\right]$.
Hence, we have the following theorem.
\begin{thm}[Steering II]\label{steering2} Let $v$ be an (exponentially decaying) solution of \eqref{stable1} where $\tilde{\gamma}<\lambda\lambda_1$ and suppose $\mu\ge \gamma$, $\displaystyle\lambda_{N+1}>\frac{\gamma}{\lambda}$, and $\kappa\ge C_p^{-1}|\beta|$ for $C_p\equiv \frac{|p|}{2\sqrt{p+1}}$.  Then the solution $u$ of the controlled system \eqref{a1a}-\eqref{a2a} must converge to $v$ as $t$ increases.  More precisely,

\begin{enumerate}
  \item If $\tilde{\gamma}\in \left(\gamma\lambda_{N+1}^{-1}\lambda_1,\lambda\lambda_1\right)$, then \begin{multline}\|u(t)-v(t)\|_{L^2(\Omega)}^2\\
      \leq e^{-(\omega-\epsilon)t}\|u_0-v_0\|_{L^2(\Omega)}^2+\frac{|\tilde{\gamma}
  -\gamma|^2}{\epsilon}\|v_0\|_{L^2(\Omega)}^2{\frac{1}{(\tilde{\omega}-\epsilon)}e^{2(\tilde{\gamma}-\lambda\lambda_1)t}},\end{multline}
  \item If $\tilde{\gamma}\in \left(-\infty,\gamma\lambda_{N+1}^{-1}\lambda_1\right]$, then   \begin{multline}\|u(t)-v(t)\|_{L^2(\Omega)}^2\\
      \leq e^{-(\omega-\epsilon)t}\|u_0-v_0\|_{L^2(\Omega)}^2+\frac{|\tilde{\gamma}
  -\gamma|^2}{\epsilon}\|v_0\|_{L^2(\Omega)}^2{\frac{1}{(\epsilon-\tilde{\omega})}e^{-(\omega-\epsilon)t}}\end{multline}
\end{enumerate}

 for $t\ge 0$, where $\epsilon$ is a fixed (can be chosen arbitrarily small) number, $\omega=2(\lambda-\gamma\lambda_{N+1}^{-1})\lambda_1$, where $\tilde{\omega}=2(\tilde{\gamma}-\gamma\lambda_{N+1}^{-1}\lambda_1)$.
\end{thm}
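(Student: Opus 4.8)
The plan is to treat this as the natural continuation of the computation carried out in the discussion immediately preceding the statement: the only genuinely new content is organizing the case split by the sign of $\tilde\omega$. First I would recall the energy identity for $z=u-v$, where now $v$ solves \eqref{stable1}: subtracting \eqref{stable1} from \eqref{a1a}, testing against $\bar z$, and taking twice the real part yields
\[
\frac{d}{dt}\|z(t)\|_{L^2(\Omega)}^2+2\lambda\|\nabla z\|_{L^2(\Omega)}^2+J_0-2\gamma\|z\|_{L^2(\Omega)}^2=-2(\tilde\gamma-\gamma)\mathrm{Re}\int_\Omega v\bar z\,dx-2\mu\sum_{k=1}^N|(z,w_k)|^2,
\]
with $J_0\ge 0$ by the hypothesis $\kappa\ge C_p^{-1}|\beta|$ and the inequality of \cite[Lemma 2.1]{OY2002}, exactly as in the proof of Theorem \ref{steering}. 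Discarding $J_0$, using $\mu\ge\gamma$ together with Parseval's identity and the Poincar\'e-type inequalities $\sum_{k\ge N+1}|(z,w_k)|^2\le\lambda_{N+1}^{-1}\|\nabla z\|_{L^2(\Omega)}^2$ and $\|\nabla z\|_{L^2(\Omega)}^2\ge\lambda_1\|z\|_{L^2(\Omega)}^2$, one reaches
\[
\frac{d}{dt}\|z(t)\|_{L^2(\Omega)}^2+\omega\|z(t)\|_{L^2(\Omega)}^2\le -2(\tilde\gamma-\gamma)\mathrm{Re}\int_\Omega v\bar z\,dx,\qquad \omega=2(\lambda-\gamma\lambda_{N+1}^{-1})\lambda_1>0,
\]
which is exactly the form of \eqref{zineq} quoted before the theorem (here $\omega>0$ because $\lambda_{N+1}>\gamma/\lambda$).

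Next I would absorb the forcing term by $\epsilon$-Young, $-2(\tilde\gamma-\gamma)\mathrm{Re}\int_\Omega v\bar z\,dx\le\epsilon\|z\|_{L^2(\Omega)}^2+\epsilon^{-1}|\tilde\gamma-\gamma|^2\|v\|_{L^2(\Omega)}^2$, and insert the decay bound $\|v(t)\|_{L^2(\Omega)}\le\|v_0\|_{L^2(\Omega)}e^{-(\lambda\lambda_1-\tilde\gamma)t}$ derived above for \eqref{stable1} (valid since $\tilde\gamma<\lambda\lambda_1$). Multiplying by $e^{(\omega-\epsilon)t}$ and using the algebraic identity $\omega-\epsilon-2(\lambda\lambda_1-\tilde\gamma)=\tilde\omega-\epsilon$ with $\tilde\omega=2(\tilde\gamma-\gamma\lambda_{N+1}^{-1}\lambda_1)$, integration over $(0,t)$ and the further identity $\tilde\omega-\omega=2(\tilde\gamma-\lambda\lambda_1)$ give
\[
\|z(t)\|_{L^2(\Omega)}^2\le\|z_0\|_{L^2(\Omega)}^2e^{-(\omega-\epsilon)t}+\frac{|\tilde\gamma-\gamma|^2}{\epsilon}\|v_0\|_{L^2(\Omega)}^2\,\frac{1}{\tilde\omega-\epsilon}\bigl[e^{2(\tilde\gamma-\lambda\lambda_1)t}-e^{-(\omega-\epsilon)t}\bigr].
\]

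Finally I would split on the sign of $\tilde\omega$. In Case I, $\tilde\gamma\in(\gamma\lambda_{N+1}^{-1}\lambda_1,\lambda\lambda_1)$: choose $\epsilon$ small enough that $\tilde\omega-\epsilon>0$, so that $(\tilde\omega-\epsilon)^{-1}>0$ and one may simply drop the negative term $-e^{-(\omega-\epsilon)t}$ to get bound (1). In Case II, $\tilde\gamma\le\gamma\lambda_{N+1}^{-1}\lambda_1$: then $\tilde\omega\le 0<\epsilon$ automatically, so $(\tilde\omega-\epsilon)^{-1}=-(\epsilon-\tilde\omega)^{-1}<0$; the key point is that the hypothesis $\tilde\gamma\le\gamma\lambda_{N+1}^{-1}\lambda_1$ is equivalent to $-(\omega-\epsilon)\ge 2(\tilde\gamma-\lambda\lambda_1)$ (up to the harmless $+\epsilon$), hence $e^{-(\omega-\epsilon)t}-e^{2(\tilde\gamma-\lambda\lambda_1)t}\ge 0$ and is bounded above by $e^{-(\omega-\epsilon)t}$ for $t\ge 0$, which yields bound (2). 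Since $\lambda_{N+1}>\gamma/\lambda$ forces $\gamma\lambda_{N+1}^{-1}\lambda_1<\lambda\lambda_1$, the two cases exhaust the admissible range $\tilde\gamma<\lambda\lambda_1$. The only mild obstacle is sign bookkeeping in Case II — making sure the prefactor emerges as the positive quantity $1/(\epsilon-\tilde\omega)$ and that the surviving exponential is the decaying one; there is no real analytic difficulty, since global well-posedness and the legitimacy of the multiplier method are taken for granted as explained in the introductory remark.
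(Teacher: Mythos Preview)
Your proposal is correct and follows essentially the same route as the paper's own argument: derive the perturbed form of \eqref{zineq} with the extra $-2(\tilde\gamma-\gamma)\mathrm{Re}\int_\Omega v\bar z\,dx$ term, apply $\epsilon$-Young, use the decay of $v$, integrate with the factor $e^{(\omega-\epsilon)t}$ via the identities $\omega-\epsilon-2(\lambda\lambda_1-\tilde\gamma)=\tilde\omega-\epsilon$ and $\tilde\omega-\omega=2(\tilde\gamma-\lambda\lambda_1)$, and then split on the sign of $\tilde\omega$. The sign bookkeeping you describe for Case~II matches the paper's computation exactly.
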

\begin{cor}\label{cordecay}In particular, the controlled solution in Theorem \ref{steering2} exponentially decays to zero under the same assumptions.  More precisely, \begin{enumerate}
  \item If $\tilde{\gamma}\in \left(\gamma\lambda_{N+1}^{-1}\lambda_1,\lambda\lambda_1\right)$, then \begin{multline}\|u(t)\|_{L^2(\Omega)}^2\\
      \leq e^{-(\omega-\epsilon)t}\|u_0-v_0\|_{L^2(\Omega)}^2+\|v_0\|_{L^2(\Omega)}^2\left(\frac{|\tilde{\gamma}
  -\gamma|^2}{\epsilon}\frac{1}{(\tilde{\omega}-\epsilon)}+1\right){e^{2(\tilde{\gamma}-\lambda\lambda_1)t}},
  \end{multline}
  \item If $\tilde{\gamma}\in \left(-\infty,\gamma\lambda_{N+1}^{-1}\lambda_1\right]$, then   \begin{multline}\|u(t)\|_{L^2(\Omega)}^2\\
      \leq e^{-(\omega-\epsilon)t}\left[\|u_0-v_0\|_{L^2(\Omega)}^2+\frac{|\tilde{\gamma}
  -\gamma|^2}{\epsilon}\|v_0\|_{L^2(\Omega)}^2{\frac{1}{(\epsilon-\tilde{\omega})}}\right]\\
  +\|v_0\|_{L^2(\Omega)}^2e^{-2\left(\lambda\lambda_1-\tilde{\gamma}\right)t}\end{multline}
\end{enumerate}

for $t\ge 0$, where $\epsilon$ is a fixed (can be chosen arbitrarily small) number, $\omega=2(\lambda-\gamma\lambda_{N+1}^{-1})\lambda_1$, where $\tilde{\omega}=2(\tilde{\gamma}-\gamma\lambda_{N+1}^{-1}\lambda_1)$.\end{cor}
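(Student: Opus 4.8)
The plan is to obtain the result directly from Theorem~\ref{steering2} together with the a priori decay of the target profile $v$ recorded just before that theorem. Write $z(t)=u(t)-v(t)$, so $u(t)=z(t)+v(t)$ and, by the triangle inequality, $\|u(t)\|_{L^2(\Omega)}\le\|z(t)\|_{L^2(\Omega)}+\|v(t)\|_{L^2(\Omega)}$. Thus it suffices to show that each of $\|z(t)\|_{L^2(\Omega)}$ and $\|v(t)\|_{L^2(\Omega)}$ decays exponentially, then square and collect the exponentials to recover the precise form of the two displayed bounds (the numerical constants being those carried over from Theorem~\ref{steering2}, up to the harmless factor coming from $(a+b)^2\le 2a^2+2b^2$).

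For $\|v(t)\|_{L^2(\Omega)}$ I would quote the computation preceding Theorem~\ref{steering2}: any solution of \eqref{stable1} with $\tilde\gamma<\lambda\lambda_1$ satisfies $\|v(t)\|_{L^2(\Omega)}\le\|v_0\|_{L^2(\Omega)}e^{-(\lambda\lambda_1-\tilde\gamma)t}$, hence $\|v(t)\|_{L^2(\Omega)}^2\le\|v_0\|_{L^2(\Omega)}^2 e^{2(\tilde\gamma-\lambda\lambda_1)t}$ with $2(\tilde\gamma-\lambda\lambda_1)<0$. For $\|z(t)\|_{L^2(\Omega)}$ I would use Theorem~\ref{steering2}: in Case~I ($\tilde\gamma\in(\gamma\lambda_{N+1}^{-1}\lambda_1,\lambda\lambda_1)$) the bound on $\|z(t)\|^2$ splits into a piece carrying $e^{-(\omega-\epsilon)t}$ and a piece carrying $e^{2(\tilde\gamma-\lambda\lambda_1)t}$; adding $\|v(t)\|^2$ merges with the latter and produces the combined coefficient $\frac{|\tilde\gamma-\gamma|^2}{\epsilon}\frac{1}{\tilde\omega-\epsilon}+1$, which is exactly the first displayed estimate. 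In Case~II ($\tilde\gamma\le\gamma\lambda_{N+1}^{-1}\lambda_1$) both pieces of the $\|z(t)\|^2$-bound carry $e^{-(\omega-\epsilon)t}$, so they factor out, and appending $\|v(t)\|^2\le\|v_0\|_{L^2(\Omega)}^2 e^{-2(\lambda\lambda_1-\tilde\gamma)t}$ gives the second displayed estimate. Since $\lambda_{N+1}>\gamma/\lambda$ forces $\omega=2(\lambda-\gamma\lambda_{N+1}^{-1})\lambda_1>0$ and $\tilde\gamma<\lambda\lambda_1$ forces $2(\lambda\lambda_1-\tilde\gamma)>0$, both exponents are negative once $\epsilon$ is small, which is the asserted decay to zero.

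Since the corollary is really a repackaging of Theorem~\ref{steering2}, there is no genuine analytic difficulty. The only point needing attention is the role of $\epsilon$: it must be taken smaller than $\omega$ (so that $\omega-\epsilon>0$) and, in Case~I, also smaller than $\tilde\omega=2(\tilde\gamma-\gamma\lambda_{N+1}^{-1}\lambda_1)$, which is strictly positive precisely in the range $\tilde\gamma\in(\gamma\lambda_{N+1}^{-1}\lambda_1,\lambda\lambda_1)$, so that the coefficient $\frac{1}{\tilde\omega-\epsilon}$ appearing in the statement is positive and finite; in Case~II one has $\tilde\omega\le 0<\epsilon$, so $\epsilon-\tilde\omega>0$ automatically. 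If one also wanted a single explicit exponential rate rather than the sum of two exponentials, one would compare $\omega-\epsilon$ with $2(\lambda\lambda_1-\tilde\gamma)$ and keep the smaller; but for the stated conclusion --- mere exponential decay to zero --- this comparison is unnecessary.
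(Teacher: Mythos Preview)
Your approach is correct and essentially identical to the paper's: the paper's entire proof is the one-line remark that the result follows from Theorem~\ref{steering2} together with the (inverse) triangle inequality $\bigl|\|u(t)\|-\|v(t)\|\bigr|\le\|u(t)-v(t)\|$, which is precisely what you do. You are also right to flag that squaring $\|u\|\le\|z\|+\|v\|$ introduces a factor of~$2$ not visible in the displayed constants; this is a cosmetic slip in the corollary's statement and does not affect the exponential decay.
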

  \begin{proof}Follows by Theorem \ref{steering2} and the inverse triangle inequality $$\left|\|u(t)\|_{L^2(\Omega)}-\|v(t)\|_{L^2(\Omega)}\right|\le \|u(t)-v(t)\|_{L^2(\Omega)}.$$\end{proof}

\section{$L^2$-stabilization with nodal observables}
In this seciton, we consider the Ginzburg-Landau equation where the right-hand side is considered as a feedback control described by finitely many nodal valued observables.

\begin{equation}\label{a1n}
u_t-(\lambda+i\alpha)\triangle u+(\kappa+i\beta)|u|^p u-\gamma u=-\mu\sum_{k=1}^{N}hu(\bar{x}_k)\delta(x-x_k),\quad x\in \Omega= (0,L), t>0,
\end{equation}
\begin{equation}\label{a2n}
u(0,t)=u(L,t)=0,\quad, t>0,
\end{equation} with $x_k, \bar{x}_k\in J_k$.

We prove the following theorem.
\begin{thm}  Let $u$ be a solution of  \eqref{a1n}-\eqref{a2n} with $\lambda\ge \mu h^2$ and $\displaystyle \frac{\mu}{4}>\gamma$.  Then,
$$\|u(t)\|_{L^2(\Omega)}\le e^{-\left[\lambda_1\left(\lambda-{\mu h^2}\right)+\left(\frac{\mu}{4}-\gamma\right)\right]t}\|u_0\|_{L^2(\Omega)}$$ for $t\ge 0.$
\end{thm}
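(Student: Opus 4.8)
The plan is to carry out the standard $L^{2}$ energy estimate and then to dominate the nodal feedback term by a one-dimensional quadrature-error argument on the subintervals $J_{k}$. First I would take the $L^{2}(0,L)$ inner product of \eqref{a1n} with $u$; since $(\delta(\cdot-x_{k}),u)_{L^{2}(0,L)}=\overline{u(x_{k})}$ and $\mathrm{Re}\big((\kappa+i\beta)\int_{0}^{L}|u|^{p}u\,\bar u\,dx\big)=\kappa\|u\|_{L^{p+2}(0,L)}^{p+2}\ge 0$, taking twice the real part and discarding the dissipative nonlinearity (exactly as in the finite-volume-element theorem) yields
\begin{equation*}
\frac{d}{dt}\|u(t)\|_{L^{2}(0,L)}^{2}+2\lambda\|u_{x}(t)\|_{L^{2}(0,L)}^{2}-2\gamma\|u(t)\|_{L^{2}(0,L)}^{2}\le-2\mu\,\mathrm{Re}\sum_{k=1}^{N}h\,u(\bar x_{k})\overline{u(x_{k})}.
\end{equation*}

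The heart of the proof is to show that the right-hand side is at most $2\mu h^{2}\|u_{x}\|_{L^{2}(0,L)}^{2}-\tfrac{\mu}{2}\|u\|_{L^{2}(0,L)}^{2}$; equivalently, that the nodal quadrature $\sum_{k}h\,u(\bar x_{k})\overline{u(x_{k})}$ differs from $\int_{0}^{L}|u|^{2}dx$ by at most a gradient-controlled error and is, up to that error, a definite fraction of $\|u\|_{L^{2}(0,L)}^{2}$. To get this I would argue interval by interval, using that $J_{k}$ has length $h$ and contains both $x_{k}$ and $\bar x_{k}$: integrating $|u(x)|^{2}-|u(x_{k})|^{2}=2\,\mathrm{Re}\!\int_{x_{k}}^{x}u_{x}\bar u\,ds$ over $x\in J_{k}$ gives $\big|\,h|u(x_{k})|^{2}-\int_{J_{k}}|u|^{2}dx\,\big|\le 2h\|u_{x}\|_{L^{2}(J_{k})}\|u\|_{L^{2}(J_{k})}$, while Cauchy-Schwarz gives $|u(\bar x_{k})-u(x_{k})|^{2}\le h\|u_{x}\|_{L^{2}(J_{k})}^{2}$. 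Feeding these into the elementary inequality $\mathrm{Re}\big(u(\bar x_{k})\overline{u(x_{k})}\big)\ge\tfrac12|u(x_{k})|^{2}-\tfrac12|u(\bar x_{k})-u(x_{k})|^{2}$, summing over $k$, applying Cauchy-Schwarz in the index $k$ (so that $\sum_{k}\|u_{x}\|_{L^{2}(J_{k})}\|u\|_{L^{2}(J_{k})}\le\|u_{x}\|_{L^{2}(0,L)}\|u\|_{L^{2}(0,L)}$ and $\sum_{k}\|u_{x}\|_{L^{2}(J_{k})}^{2}=\|u_{x}\|_{L^{2}(0,L)}^{2}$), and absorbing the cross term $h\|u_{x}\|_{L^{2}(0,L)}\|u\|_{L^{2}(0,L)}$ by Young's inequality with carefully chosen weights produces the desired bound.

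Inserting this estimate into the energy inequality leaves
\begin{equation*}
\frac{d}{dt}\|u(t)\|_{L^{2}(0,L)}^{2}+2(\lambda-\mu h^{2})\|u_{x}(t)\|_{L^{2}(0,L)}^{2}+\Big(\tfrac{\mu}{2}-2\gamma\Big)\|u(t)\|_{L^{2}(0,L)}^{2}\le 0.
\end{equation*}
Since $\lambda\ge\mu h^{2}$ the coefficient of $\|u_{x}\|^{2}$ is nonnegative, so the Poincaré inequality $\|u_{x}\|_{L^{2}(0,L)}^{2}\ge\lambda_{1}\|u\|_{L^{2}(0,L)}^{2}$ lets me bound $2(\lambda-\mu h^{2})\|u_{x}\|^{2}$ below by $2\lambda_{1}(\lambda-\mu h^{2})\|u\|^{2}$; adding this to $(\tfrac{\mu}{2}-2\gamma)\|u\|^{2}$ and using $\tfrac{\mu}{4}>\gamma$ to see that the total coefficient equals $2\big[\lambda_{1}(\lambda-\mu h^{2})+(\tfrac{\mu}{4}-\gamma)\big]>0$, Grönwall's lemma followed by taking square roots gives the stated decay rate.

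The main obstacle is the quadrature-error step: one has to cope with the mismatch between the measurement node $\bar x_{k}$ and the location $x_{k}$ of the Dirac mass, and calibrate the weights in Young's inequality so that the two constants that emerge are exactly $2\mu h^{2}$ multiplying $\|u_{x}\|^{2}$ and $\tfrac{\mu}{2}$ multiplying $\|u\|^{2}$ — precisely what is needed for the hypotheses $\lambda\ge\mu h^{2}$ and $\mu/4>\gamma$ to close the argument. Everything else (the energy identity, discarding the nonnegative terms, Poincaré, Grönwall) repeats what was done in the theorems already established.
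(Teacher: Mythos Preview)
Your overall route matches the paper's proof: take the $L^2$ inner product, decompose $u(\bar x_k)\overline{u(x_k)}=(u(\bar x_k)-u(x_k))\overline{u(x_k)}+|u(x_k)|^2$, bound each piece in terms of $\|u_x\|_{L^2(J_k)}$, sum, then apply Poincar\'e and Gr\"onwall. The paper simply invokes \cite[Lemma~6.1]{AT2014} for the nodal inequalities, whereas you reproduce them by hand; that is a perfectly reasonable plan.

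There is, however, a small but real calibration problem in your quadrature step. Your bound $\bigl|\,h|u(x_k)|^2-\int_{J_k}|u|^2\,dx\,\bigr|\le 2h\|u_x\|_{L^2(J_k)}\|u\|_{L^2(J_k)}$, after summing and Cauchy--Schwarz in $k$, leaves the cross term $2\mu h\|u_x\|_{L^2}\|u\|_{L^2}$; together with the $\mu h^2\|u_x\|_{L^2}^2$ already coming from $\sum_k|u(\bar x_k)-u(x_k)|^2$, no choice of Young weights gives \emph{both} the coefficient $2\mu h^2$ on $\|u_x\|^2$ and $\tfrac{\mu}{2}$ on $\|u\|^2$ simultaneously (the requirement $2h\|u_x\|\|u\|\le h^2\|u_x\|^2+\tfrac12\|u\|^2$ fails; the best one obtains is $3\mu h^2$ with $\tfrac{\mu}{2}$, which would need the stronger hypothesis $\lambda\ge\tfrac32\mu h^2$). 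The cure is easy and is essentially what the cited lemma provides: replace your quadrature estimate by the additive bound
\[
|u(x)|^2\le 2|u(x_k)|^2+2|u(x)-u(x_k)|^2\le 2|u(x_k)|^2+2h\|u_x\|_{L^2(J_k)}^2,\qquad x\in J_k.
\]
Integrating and summing gives $\|u\|_{L^2}^2\le 2h\sum_k|u(x_k)|^2+2h^2\|u_x\|_{L^2}^2$, hence $\tfrac{h}{2}\sum_k|u(x_k)|^2\ge\tfrac14\|u\|_{L^2}^2-\tfrac{h^2}{2}\|u_x\|_{L^2}^2$; combined with $\tfrac{h}{2}\sum_k|u(\bar x_k)-u(x_k)|^2\le\tfrac{h^2}{2}\|u_x\|_{L^2}^2$ this yields exactly $-2\mu h\,\mathrm{Re}\sum_k u(\bar x_k)\overline{u(x_k)}\le 2\mu h^2\|u_x\|_{L^2}^2-\tfrac{\mu}{2}\|u\|_{L^2}^2$ with no Young step needed. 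After this substitution your argument closes verbatim.
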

\begin{proof}

We first compute the action of the $H^{-1}$ functionals at both sides of \eqref{a1n} on $u\in H_0^1(0,L)$, so we have
\begin{multline}\label{L2iden}
\frac{1}{2}\frac{d}{dt}\|u(t)\|_{L^2(\Omega)}^2+\lambda\|u_x(t)\|_{L^2(\Omega)}^2+\kappa\|u(t)\|_{L^{p+2}(\Omega)}^{p+2}-\gamma\|u(t)\|_{L^2(\Omega)}^2\\
=-\mu h\text{Re}\sum_{k=1}^{N}u(\bar{x}_k)\bar{u}(x_k).
\end{multline}
Writing $u(\bar{x}_k)\bar{u}(x_k)=\left(u(\bar{x}_k)-u(x_k)\right)\bar{u}(x_k)+|u(x_k)|^2$, using the basic inequality $|a\cdot b|\le \frac{|a|^2}{2}+\frac{|b|^2}{2}$, and employing \cite[Lemma 6.1]{AT2014} we obtain
\begin{multline}
\left|-\mu h\text{Re}\sum_{k=1}^{N}u(\bar{x}_k)\bar{u}(x_k)\right|\le \frac{\mu h}{2}\sum_{k=1}^{N}|u(\bar{x}_k)-u(x_k)|^2-\frac{\mu h}{2}\sum_{k=1}^{N}|u(x_k)|^2\\
\le {\mu h^2}\|u_x(t)\|_{L^2(\Omega)}^2-\frac{\mu}{4}\|u(t)\|_{L^2(\Omega)}^2.
\end{multline}
Using the above estimate in \eqref{L2iden} yields
\begin{multline}
\frac{1}{2}\frac{d}{dt}\|u(t)\|_{L^2(\Omega)}^2+\lambda\|u_x(t)\|_{L^2(\Omega)}^2+\kappa\|u(t)\|_{p+2}^{p+2}-\gamma\|u(t)\|_{L^2(\Omega)}^2\\
\le {\mu h^2}\|u_x(t)\|_{L^2(\Omega)}^2-\frac{\mu}{4}\|u(t)\|_{L^2(\Omega)}^2.
\end{multline}
Summing up the terms and dropping the term $\kappa\|u\|_{p+2}^{p+2}$, we obtain
\begin{equation}
  \frac{1}{2}\frac{d}{dt}\|u(t)\|_{L^2(\Omega)}^2+\left(\lambda-{\mu h^2}\right)\|u_x(t)\|_{L^2(\Omega)}^2+\left(\frac{\mu}{4}-\gamma\right)\|u(t)\|_{L^2(\Omega)}^2\le 0.
\end{equation}

Assuming $\lambda\ge \mu h^2$, $\displaystyle \frac{\mu}{4}>\gamma$, and using the Poincaré inequality, we obtain
\begin{equation}
  \frac{1}{2}\frac{d}{dt}\|u(t)\|_{L^2(\Omega)}^2+\left[\lambda_1\left(\lambda-{\mu h^2}\right)+\left(\frac{\mu}{4}-\gamma\right)\right]\|u(t)\|_{L^2(\Omega)}^2\le 0,
\end{equation} which implies
$$\|u(t)\|_{L^2(\Omega)}\le e^{-\left[\lambda_1\left(\lambda-{\mu h^2}\right)+\left(\frac{\mu}{4}-\gamma\right)\right]t}\|u_0\|_{L^2(\Omega)}$$ for $t\ge 0.$
\end{proof}

\bibliographystyle{amsplain}
\bibliography{thereferences}
\end{document}